\def\figurename{Figure} 
\renewcommand{\fnum@figure}[1]{\figurename~\thefigure.}
\def\tablename{Table} 
\renewcommand{\fnum@table}[1]{\tablename~\thetable.}
\newtheorem{theorem}{Theorem}[section]
\newtheorem{proposition}[theorem]{Proposition}
\newtheorem{remark}[theorem]{Remark}
\newtheorem{Definition}[theorem]{Definition}
\numberwithin{equation}{section}
\def\P{\mathbb P}
\def\R{\mathbb R}
\def\E{\mathbb E}
\def\Q{\mathbb Q}
\def\E{\mathbb E}
\title{Backward stochastic differential equations with time-delayed generators and integrable parameters}
\author{Auguste Aman$^{\, b,}$  \footnote{Corresponding author: augusteaman5@yahoo.fr}\; and \; Yong Ren$^{\,a,}$\footnote{renyong@126.com}\\\\
a. Université Félix H. Boigny, UFR Mathématiques et Informatique,\\  Abidjan, Côte d'Ivoire\\
b. Anhui Normal University, Department of Mathematics, Wuhu, Chine}
\begin{document}
\maketitle
\date{}
\begin{abstract}
In this note, we derive an existence and uniqueness results for delayed backward stochastic differential equation with only integrable data. 
\end{abstract}

\vspace{.08in} \noindent {\bf 2000 MR Subject Classification:} 60H15; 60H20; 60H30\\
\vspace{.08in} \noindent \textbf{Keywords:} Backward stochastic differential equation; time delayed generator; $L^{1}$-solutions

\section{Introduction}
In this note, we study the problem of existence and uniqueness result for the backward stochastic differential equations driven by a standard Brownian motion $W$ and with delayed generator $f$, terminal value $\xi$ and fixed terminal time $T$. More precisely we have 
\begin{eqnarray}\label{eq1}
Y(t)=\xi +\int_{t}^{T}f(s,Y_{s},Z_{s})ds-\int_{t}^{T}Z(s)dW(s), 0\leq t\leq T,
\end{eqnarray}
where $(Y_s,Z_{s})=( Y(s+u),Z(s+u) )_{-T\leq u \leq 0}$. Introduced by Delong and Imkeller in \cite{Imk1} and with added Poisson random measures in \cite{Imk2}, this type of BSDEs gave a Mathematic formulation of many problem in finance and insurance, (see \cite{Del}). Since this three results, a lot of works concerned the delayed BSDEs have been done in literature. Among others one can cite these works of Ren et al., \cite{Ral}, Coulibaly and Aman,\cite{CA}, Tuo et al., \cite{Tal}, Dos Reis et al., \cite{Reis} and Ren et al.\cite{ROA}. In all this works, existence and uniqueness result are provided under delayed Lipschitz or non-Lipschitz condition with $p$-integrable data, for $p> 1$. For the BSDEs without delay, inspired by the notion of $g$-martingale, introduced by Peng \cite{Peng}, Briand et al.\cite{Bral} establish an existence and uniqueness result with only integrable data. This type of problem has never been tackled in the scientific literature of the BSDEs with delay. It is very important in our opinion that this gap can be corrected. Indeed, knowing that all the models described by the BSDEs without delay have their version with delay, this generalization will allow to extend the study of existing models to those with delay. For example, let us consider the financial market which consists of two tradeable instruments. A risk-free asset whose price $B = (B(t), 0\leq t\leq T)$ is governed by
\begin{eqnarray*}
	\frac{dB(t)}{B(t)}=r(t)dt,\;\;\; B(0)=1,
\end{eqnarray*}
where $r = (r(t), 0\leq t\leq T)$ denotes the risk-free interest rate in the market. A risky bond (action) whose price is denoted by  $D = (D(t), 0 \leq t \leq T)$ and  described by
\begin{eqnarray*}
	\frac{dD(t)}{D(t)}=[r(t)+\sigma(t)\theta(t)]dt+\sigma(t)dW(t),\;\;\; D(0)=d_0,
\end{eqnarray*}
where $\sigma$ and $\theta$ are respectively the volatility and premium risky of the action. Let denote by $(X(t),\; 0\leq t\leq T)$ and $(\pi(t),\; 0\leq t\leq T)$ respectively an investment portfolio and an amount invested in bond. We have
\begin{eqnarray*}
dX(t)=\pi(t)[\mu(t)dt+\sigma(t)dW]+(X(t)-\pi(t))r(t)dt,\;\;\; X(0)=x,
\end{eqnarray*}
where $\mu(t)=r(t)+\sigma(t)\theta(t)$. Setting 
\begin{eqnarray*}
Y(t)=X(t)\exp\left(-\int_0^tr(s)ds\right),\;\; Z(t)=\pi(t)\sigma(t)\exp\left(-\int_0^tr(s)ds\right),
\end{eqnarray*}
it follows from Girsanov theorem that there exist a probability measure $\Q$ equivalent to $\P$ and a $\Q$-brownian motion $W^{\Q}$ such that $(Y,Z)$ satisfy
\begin{eqnarray*}
dY(t)=Z(t)dW^{\Q}(t),\;\;\; Y(0)=x.
\end{eqnarray*}
If we want to use such investment strategy $Z$ and an investment portfolio $Y$ to replicate a liability or meet a target of the form $Y(0)+(\xi-Y(0))^{+}$ (which appear on the option-based portfolio assurance), $(Y,Z)$ satisfies the following time-delayed BSDE
\begin{eqnarray}
Y(t)=Y(0)+(\xi-Y(0))^{+}-\int^T_tZdW^{\Q}(t).\label{Delay}
\end{eqnarray}
The question of solving BSDE \eqref{Delay} when the data
are only integrable comes natural.

The aim of this note is to answer this question and provide an existence and uniqueness result for delayed BSDEs when the data are only integrable.

 It is organized as follows. In Section 2, we give all notations and the setting of the problem. The Section 3 is devoted to prove existence and uniqueness result.

\section{Notations and Setting of the Problem}
Throughout this paper, we consider the probability space $(\Omega,\mathcal{F},\mathbb{P})$ and $T$ a real and positive constant. We define on the above probability space, $W=(\displaystyle W_{t})_{0\leq t\leq T}$ a standard Brownian motion in values in $ \mathbb{R}^{d}$. Let denote $\mathcal{N}$ the class of $\P$-null sets of $\mathcal{F}$ and define
\begin{eqnarray*}
	\mathcal{F}_t=\mathcal{F}^{W}_t\vee\mathcal{N},
\end{eqnarray*}
 where $\mathcal{F}_{t}^{W}=\sigma(W_s,\; 0\leq s\leq t)$. The standard inner product of $\mathbb{R}^{k}$ is denoted by
$\langle \cdot,\cdot\rangle$ and the Euclidean norm by $|\cdot|$. A
norm on $\mathbb{R}^{d\times k}$ is defined by
$\sqrt{Tr(zz^{\star})}$, where $z^{\star}$ is the transpose of
$z$ will also be denoted by $|\cdot|$.

Next for any real $p>0$ , let consider these following needed spaces.

\begin{description}
\item $\bullet$ Let $\mathcal{S}^{p}(\mathbb{R}^{k})$ denote the set of $ \mathbb{R}^{k}$-valued, adapted and càdlàg processes $(\varphi(t))_{t\geq 0}$ such that
\begin{eqnarray*}
\|\varphi(t)\|_{\mathcal{S}^{p}(\mathbb{R}^{k})}=\left[\mathbb{E}\left(\sup_{0\leq t\leq T}|\varphi(t)|^{p}\right)\right]^{1\wedge 1/p}< + \infty.
\end{eqnarray*}
\item $\bullet$ Let $\mathcal{H}^{p}(\mathbb{R}^{k \times d})$ denote the set of predictable processes $(\psi(t))_{t\geq 0}$ with value in $\mathbb{R}^{k \times d}$ such that
\begin{eqnarray*}
\|\psi\|_{\mathcal{H}^{p}(\mathbb{R}^{k\times d})}=\left[\mathbb{E}\left(\int_{0}^{T}|\psi(s)|^{2}ds\right)^{p/2}\right]^{1\wedge  1/p}.
\end{eqnarray*}
\item $\bullet$ Let $(D)$ be the space of progressively continuous processes $\varphi$ such that the family $\left\{\varphi(\tau),\; \tau\in \sum_T\right\}$ is uniformly integrable. For a process $\varphi$ in $(D)$, we put
\begin{eqnarray*}
\|\varphi\|_{1}=\sup\left\{\E(Y_{\tau}),\;\; \tau\in \sum\,_T\right\}.
\end{eqnarray*}
$(D)$ is complete space under this norm.
\end{description}
\begin{remark}
\begin{itemize}
	\item [(i)] If $p\geq 1,\; \|.\|_{\mathcal{S}^{p}(\mathbb{R}^{k})}$ is a norm on $\mathcal{S}^p(\R^{k})$ and if  $p\in (0,1),\; (X,X')\mapsto \|X-X'\|_{\mathcal{S}^{p}(\mathbb{R}^{k})}$ is a metric on $\mathcal{S}^p(\R^{k})$.
	\item [(ii)] For $p\geq 1$, $\mathcal{H}^p(\R^{k\times d})$ is a banach space with norm $\|.\|_{\mathcal{H}^{p}(\mathbb{R}^{k\times d})}$ and for $p\in (0,1),\, \mathcal{H}^{p}(\mathbb{R}^{k\times d})$ is a complete metric space with the resulting distance.
\end{itemize}

\end{remark}
We also consider these two additive spaces
\begin{description}
\item $\bullet$ Let $L_{-T}^{2}(\mathbb{R}^{k\times d})$ denote the space of jointly measurable functions         $z:[-T,0]\rightarrow\mathbb{R}^{k \times d}$ such that
\begin{eqnarray*}
\int_{-T}^{0}|z(t)|^{2}dt<+\infty.
\end{eqnarray*}
\item $\bullet$ Let $L_{-T}^{\infty}(\mathbb{R}^{k})$ denote the space of bounded and  jointly measurable functions $y:[-T,0]\rightarrow\mathbb{R}^{k}$ such that
\begin{eqnarray*}
\sup_{-T\leq t\leq 0}|y(t)|^{2}<+\infty.
\end{eqnarray*}
\end{description}
We now recall our backward stochastic differential equation with time delayed generator
\begin{eqnarray}\label{A}
Y(t)=\xi+\int_t^T f(s,Y_s,Z_{s})\,{\rm d}s-\int_t^TZ(s)\,{\rm d}W(s),\quad 0\leq t\leq T,
\end{eqnarray}
where for the process solution $(Y,Z)$, the process $(Y_s,Z_{s}) = (Y(s+u),Z(s+u) )_{-T \leq u\leq 0} $ is at each time $s$, its all past values . In order to extend the solution to interval $[-T,0] $ we always suppose that $Y(s)=Y(0)$ and $Z(s)=0$ for $s<0$.

Now, we make the following assumptions on the data $(\xi,f)$.
\begin{description}
\item [({\bf H1})] $\xi$ is $\mathcal{F}_T$-mesurable and integrable variable.
\item [({\bf H2})] $f:\Omega\times [0,T]\times L_{-T}^{\infty}(\mathbb{R}^{k})\times L_{-T}^{2}(\mathbb{R}^{k\times d})\rightarrow \mathbb{R}^{k}$ be a progressively measurable function such   
\begin{enumerate}
\item  for some probability $\alpha$ defined on \newline $([-T,0],\mathcal{B}([-T,0]))$,
\begin{itemize}
\item [$(i)$] there exists a positive constant $K$ satisfying 
\begin{eqnarray*}
&&\mid f(s,y_s,z_{s}) - f(s,y'_s ,z'_{s} )\mid\\
&\leq &  K\int_{-T}^0 ( \mid y(s+u) - y'(s+u) \mid+ \mid z(s+u) - z'(s+u) \mid)\alpha (du),
\end{eqnarray*}
\item [$(ii)$] there  exists $\gamma\geq 0,\; \delta\in (0,1)$ and a non-negative progressively mesurable process $(g(t))_{-T\leq t\leq T}$ such that
\begin{eqnarray*}\label{Assumpbis}
\mid f(s,y_s,z_{s})-f(s,{\bf 0},{\bf 0})\mid 
&\leq &  \gamma\int_{-T}^0 (g(s+u)+\mid y(s+u) \mid+ \mid z(s+u)\mid )^{\delta}\alpha (du),
\end{eqnarray*}
\end{itemize}
for $\P\otimes\lambda $-a.e. $(\omega,s) \in \Omega \times [0,T]$ and for any $(y_s,z_{s}),(y'_s,z'_{s}) \in L_{-T}^{\infty} (\mathbb{R}^k) \times  L_{-T}^2 (\mathbb{R}^{k\times d})$,

\item $\displaystyle\mathbb{E}\left[\int_{0}^{T}(\left|f(t,{\bf 0,0})\right|+g(t))
dt\right]<+\infty$,
\item $\displaystyle f(t,\cdot,\cdot)=g(t)=0$, for $\ t<0$.
\end{enumerate}
\end{description}
Let us end this section by giving the definition of a solution to BSDE \eqref{A} and recall a priori estimates established in \cite{Ral}.

\begin{Definition}
A solution of BSDE \eqref{A} is a pair of a progressively measurable $\R^{k}\times\R^{k\times d}$-valued process $(Y,Z)$ such that: $\P$-a.s., $t\mapsto Z(t)$ belongs to $L^2(0,T)$ and $t\mapsto f(t,Y_t,Z_t)$ belongs to $L^1(0,T)$.
\end{Definition} 
\section{Mains results}
This section is devoted to establish the existence and uniqueness result for BSDE \eqref{A} in $L^1$-sense. For this fact, we shall fist recall a priori estimates established in \cite{ROA}. 
\begin{proposition}\label{lm1}
Assume $\textbf{(H1)}$-$\textbf{(H2)}$ hold.  Let $(Y,Z)$ be a solution to the delayed BSDE \eqref{A} such that for $p>1$, $Y\in \mathcal{S}^{p}$. Then there exist a positive constant $C_{p}$ depending on
 $p,K,T$ such that
\begin{eqnarray*}
\mathbb{E}\left[\left(\int_{0}^{T}|Z(s)|^{2}ds\right)^{\frac{p}{2}}\right]\leq d_p\E(\sup_{0\leq t\leq T}|Y(t)|^{ p})+C_{p}\mathbb{E}\left[|\xi|^p+\left(\int_{0}^{T}|f(s,0,0)|^{2}ds\right)^{\frac{p}{2}}\right],
\end{eqnarray*}
where \\
$
d_p=
\left\{
\begin{array}{llll}
2^{3p/2-2}\left(\frac{1}{2}-2^{p-1}(TK)^{p/2}\right)^{-1}\left(K^{p/2}(T+1)^{p/2}+2^{p/2-1}\lambda^2_p\right)&\mbox{if}\;\; p>2,\\\\
2^{5p/2-2}\left(\frac{1}{2}-2^{2p-2}(TK)^{p/2}\right)^{-1}\left(K^{p/2}(T+1)^{p/2}+2^{p/2-1}\lambda^2_p\right)&\mbox{if}\;\; 1<p<2.
\end{array}
\right.
$\\
 avec\\
 $
\lambda_p= \left\{
\begin{array}{llll}
\left(\frac{p}{p-1}\right)^{p^2/2}\left(\frac{p(p-1)}{2}\right)^{p/2}&\mbox{if}\;\; p>2,\\\\
\left(\frac{4}{p}\right)^{p/4}\frac{4}{4-p}&\mbox{if}\;\; 1<p<2.
\end{array}
\right.
 $
\end{proposition}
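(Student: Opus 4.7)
The estimate is recalled from \cite{ROA}, so the cleanest route is to refer directly to that paper; still, the scheme is standard and I would reconstruct it as follows.

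The starting point is Itô's formula applied to $|Y(t)|^{2}$ on $[0,T]$: since $(Y,Z)$ solves \eqref{A},
\begin{eqnarray*}
|Y(0)|^{2}+\int_{0}^{T}|Z(s)|^{2}ds=|\xi|^{2}+2\int_{0}^{T}\langle Y(s),f(s,Y_{s},Z_{s})\rangle ds-2\int_{0}^{T}\langle Y(s),Z(s)dW(s)\rangle.
\end{eqnarray*}
Rearranging and raising to the power $p/2$, taking expectation and applying the Burkholder--Davis--Gundy inequality to the martingale term produces the universal constant $\lambda_{p}$ that appears in the statement. This is the ``Itô + BDG'' half of the proof; the BDG factor is applied to $(\int_{0}^{T}|Y(s)|^{2}|Z(s)|^{2}ds)^{p/4}$, which after Young's inequality splits into a piece $\varepsilon\, \mathbb{E}(\int_{0}^{T}|Z|^{2}ds)^{p/2}$ to be absorbed on the left, and a piece $C_{\varepsilon}\,\mathbb{E}(\sup_{t}|Y(t)|^{p})$ that feeds $d_{p}$.

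The delay part is absorbed as follows. By \textbf{(H2)}(i),
\begin{eqnarray*}
|f(s,Y_{s},Z_{s})|\leq |f(s,\mathbf{0},\mathbf{0})|+K\int_{-T}^{0}\bigl(|Y(s+u)|+|Z(s+u)|\bigr)\alpha(du),
\end{eqnarray*}
and then I would use Fubini together with the extension convention $Y(s)=Y(0)$, $Z(s)=0$ for $s<0$ (together with $\alpha$ being a probability measure on $[-T,0]$) to bound $\int_{0}^{T}\!\int_{-T}^{0}|Z(s+u)|\alpha(du)\,ds$ by $\int_{0}^{T}|Z(s)|ds\leq T^{1/2}(\int_{0}^{T}|Z|^{2}ds)^{1/2}$, and similarly for the $Y$-integral by $T\sup_{t}|Y(t)|$. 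Cauchy--Schwarz against $|Y(s)|$ in the $\langle Y,f\rangle$ term then yields a contribution of order $K^{p/2}(T+1)^{p/2}$ plus a further quantity of the form $(TK)^{p/2}\,\mathbb{E}(\int_{0}^{T}|Z|^{2}ds)^{p/2}$.

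Collecting everything, the expectation $\mathbb{E}(\int_{0}^{T}|Z|^{2}ds)^{p/2}$ appears on both sides with a small coefficient on the right ($2^{p-1}(TK)^{p/2}$ if $p>2$, $2^{2p-2}(TK)^{p/2}$ if $1<p<2$, which is precisely what is being subtracted from $1/2$ in the denominators of $d_{p}$). The two regimes $p>2$ and $1<p<2$ diverge only because the conjugate exponent chosen in Young's inequality, and the form of the BDG constant $\lambda_{p}$, are different in each case; the algebra is otherwise identical. Solving for $\mathbb{E}(\int_{0}^{T}|Z|^{2}ds)^{p/2}$ produces the announced bound with constants $d_{p}$ and $C_{p}$. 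The main obstacle, and the only non-routine step, is bookkeeping: one must keep the delay integrals aligned with the $\alpha$-probability so that the Fubini swap is clean, and verify that the coefficient on the left remains positive after absorption—precisely the implicit smallness condition encoded in $d_{p}$.
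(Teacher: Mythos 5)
The paper gives no proof of Proposition~\ref{lm1} at all --- it is explicitly recalled from \cite{ROA} --- so there is nothing internal to compare against; your reconstruction (It\^o's formula for $|Y|^{2}$, BDG with constant $\lambda_{p}$, Young's inequality to absorb $\varepsilon\,\mathbb{E}(\int_{0}^{T}|Z|^{2}ds)^{p/2}$ on the left, and Fubini plus the convention $Y(s)=Y(0)$, $Z(s)=0$ for $s<0$ to reduce the delay integrals to ordinary ones) is the standard argument and is exactly consistent with the displayed constants, in particular with the factors $(TK)^{p/2}$ subtracted from $1/2$ in the denominator of $d_{p}$ and the split into the regimes $p>2$ and $1<p<2$. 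Your outline is correct.
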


\begin{proposition}\label{prop1}
Assume $\textbf{(H1)}$-$\textbf{(H2)}$ hold and $K$ and $T$ small enough.  Let $(Y,Z)$ be a solution to the delayed BSDE \eqref{A}. For any $p>1$, if $Y \in \mathcal{S}^{p}$ then, $Z\in \mathcal{H}^{p}$ and there exist
a positive constant $C_{p}$ depending on $p, K, T$ such that
\begin{eqnarray*}
\mathbb{E}\left[\sup_{0\leq t\leq T}|Y(t)|^{ p}+\left(\int_{0}^{T}|Z(t)|^{2}dt\right)^{\frac{p}{2}}\right]\leq C_{p}\mathbb{E}\left[|\xi|^{p}+\left(\int_{0}^{T}|f(t,0,0)|^{2}dt\right)^{\frac{p}{2}}\right].
\end{eqnarray*}
\end{proposition}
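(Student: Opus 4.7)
Since Proposition~\ref{lm1} already bounds $\mathbb{E}\bigl(\int_0^T|Z(s)|^2\,ds\bigr)^{p/2}$ by a multiple of $\mathbb{E}\sup_t|Y(t)|^p$ plus a data term, the missing piece is an a priori bound of $\mathbb{E}\sup_{0\le t\le T}|Y(t)|^p$ by the data alone. My plan is to produce such a bound directly from \eqref{A} via the triangle inequality, BDG and the delayed Lipschitz assumption, and then close the system with Proposition~\ref{lm1} by absorption.

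From \eqref{A} one has $\sup_t|Y(t)|\le |\xi|+\int_0^T|f(s,Y_s,Z_s)|\,ds+\sup_t\bigl|\int_t^T Z(s)\,dW(s)\bigr|$. Raising to the $p$-th power and taking expectation, BDG controls the stochastic integral by $C_p\,\mathbb{E}\bigl(\int_0^T|Z|^2\bigr)^{p/2}$. For the drift, (H2)(i) dominates $|f(s,Y_s,Z_s)|$ by $|f(s,0,0)|+K\int_{-T}^0(|Y(s+u)|+|Z(s+u)|)\,\alpha(du)$. Since $\alpha$ is a probability measure, Fubini combined with the change of variables $v=s+u$ and the conventions $Y(\cdot)=Y(0)$, $Z(\cdot)=0$ on $[-T,0)$ collapses the delay to
\begin{equation*}
\int_0^T|f(s,Y_s,Z_s)|\,ds\le \int_0^T|f(s,0,0)|\,ds+KT\sup_t|Y(t)|+K\sqrt{T}\Bigl(\int_0^T|Z(s)|^2\,ds\Bigr)^{1/2},
\end{equation*}
using Cauchy--Schwarz on the $Z$-piece. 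Raising to the $p$-th power, together with $\bigl(\int_0^T|f(s,0,0)|\,ds\bigr)^p\le T^{p/2}\bigl(\int_0^T|f(s,0,0)|^2\,ds\bigr)^{p/2}$, yields an inequality of the form
\begin{equation*}
\mathbb{E}\sup_t|Y(t)|^p\le C_p\,A+C_p(KT)^p\,\mathbb{E}\sup_t|Y(t)|^p+C_p\bigl(1+(K\sqrt{T})^p\bigr)\,\mathbb{E}\Bigl(\int_0^T|Z(s)|^2\,ds\Bigr)^{p/2},
\end{equation*}
where $A:=\mathbb{E}|\xi|^p+T^{p/2}\mathbb{E}\bigl(\int_0^T|f(s,0,0)|^2\,ds\bigr)^{p/2}$.

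Now I substitute Proposition~\ref{lm1} into the last term. The coefficient of $\mathbb{E}\sup_t|Y(t)|^p$ on the right then becomes $C_p\bigl\{(KT)^p+(1+(K\sqrt{T})^p)\,d_p\bigr\}$, and the smallness hypothesis on $K$ and $T$ is calibrated so that this is strictly less than one. The $\sup$-term can therefore be moved to the left, delivering the announced bound for $\mathbb{E}\sup|Y|^p$; plugging back into Proposition~\ref{lm1} controls the $Z$-norm, and summing the two inequalities produces the stated estimate.

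\emph{Expected main obstacle.} The delicate step is the Fubini/change-of-variables manipulation in handling the delayed $|Z(s+u)|$-integral: one must recover it as the square root of $\int_0^T|Z|^2\,ds$ (rather than a mismatched $\int|Z|^p$ or a weighted integral) with a coefficient genuinely small in $(K,T)$. This is precisely what allows the absorption against the fixed $d_p$-factor coming from Proposition~\ref{lm1} to succeed under the smallness hypothesis; quantifying how small $KT$ must be compared to the universal BDG constants and $d_p$ is where the calibration has to be done carefully.
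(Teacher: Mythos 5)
Your reduction of the drift term is fine (the Fubini/change-of-variables step you flag as the main obstacle does go through: with $\alpha$ a probability measure and the conventions $Y\equiv Y(0)$, $Z\equiv 0$ on $[-T,0)$, the delayed integrals collapse to $T\sup_t|Y(t)|$ and $\sqrt{T}\bigl(\int_0^T|Z|^2\bigr)^{1/2}$ exactly as you claim). The genuine gap is in the closing absorption. After substituting Proposition \ref{lm1} you need
$C_p\bigl\{(KT)^p+(1+(K\sqrt{T})^p)\,d_p\bigr\}<1$, and you assert this can be arranged by taking $K,T$ small. It cannot: the smallness of $K,T$ only kills $(KT)^p$ and $(K\sqrt{T})^p$, while the dominant term is the product $C_p\,d_p$ of two constants that do \emph{not} shrink. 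Indeed, from the explicit formula in Proposition \ref{lm1}, as $KT\to 0$ one has $d_p\to 2^{2p-2}\lambda_p^2\geq 4$ (the Burkholder--Davis--Gundy contribution $\lambda_p$ survives), and your $C_p$ in front of $\mathbb{E}\bigl(\int_0^T|Z|^2\bigr)^{p/2}$ is itself at least the BDG constant coming from $\sup_t\bigl|\int_t^T Z\,dW\bigr|$, hence $\geq 1$. So the coefficient of $\mathbb{E}\sup_t|Y(t)|^p$ on the right stays well above $1$ no matter how small $K$ and $T$ are, and the $\sup$-term cannot be moved to the left. This is not a calibration issue but a structural one: both coupling inequalities in your $2\times 2$ system relate two full-size quantities ($\mathbb{E}\sup|Y|^p$ and $\mathbb{E}(\int|Z|^2)^{p/2}$) through fixed BDG-type constants, and the product of those constants is never below $1$ (consider $f=0$ and $\xi$ arbitrary to see neither direction can hold with a small constant).

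The standard way to close such an estimate -- and the route taken in the reference \cite{ROA} that the paper cites for this proposition, following \cite{Bral} -- is to apply It\^o's formula to $|Y(t)|^p$ rather than the raw triangle inequality. This produces the correction term $c\int_t^T|Y(s)|^{p-2}|Z(s)|^2\mathbf{1}_{\{Y(s)\neq 0\}}\,ds$ with a favourable sign; the BDG bound on the martingale part $\int|Y|^{p-2}\langle Y,Z\,dW\rangle$ has quadratic variation dominated by $\sup_t|Y(t)|^{p}\cdot\int|Y|^{p-2}|Z|^2$, which Young's inequality splits into $\varepsilon\,\mathbb{E}\sup_t|Y(t)|^p$ (absorbable for $\varepsilon$ small, with no smallness of $K,T$ required) plus a multiple of the good-sign term. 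The smallness of $K$ and $T$ is then needed only to absorb the genuinely small delay contributions, which is where your $KT$ and $K\sqrt{T}$ factors correctly appear. As written, your argument does not establish the proposition.
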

Our mains results are two theorems. The first is a uniqueness result which is stated in Theorem \ref{Th1}. The second is an existence result stated in Theorem \ref{T2}.

\begin{theorem}\label{Th1}
Let assume $({\bf H1})$-$({\bf H2})$ hold and $T$ and $K$ small enough. Then BSDE \eqref{A} has at most one solution $(Y,Z)$ such that $Y$ belongs to $(D)$ and  $Z$ belongs to the space $\displaystyle \bigcup_{\beta>\delta}\mathcal{H}^{\beta}(\R^{k\times d})$.
\end{theorem}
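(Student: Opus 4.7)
Let $(Y,Z)$ and $(Y',Z')$ be two solutions in $(D)\times\bigcup_{\beta>\delta}\mathcal H^{\beta}(\R^{k\times d})$. Writing $\bar Y=Y-Y'$, $\bar Z=Z-Z'$ and $\Delta f(s)=f(s,Y_s,Z_s)-f(s,Y'_s,Z'_s)$, the difference solves the homogeneous linearised BSDE
\begin{equation*}
\bar Y(t)=\int_t^{T}\Delta f(s)\,ds-\int_t^{T}\bar Z(s)\,dW(s),\qquad \bar Y(T)=0,
\end{equation*}
and \textbf{(H2)(i)} yields the delayed Lipschitz bound
\begin{equation*}
|\Delta f(s)|\le K\int_{-T}^{0}(|\bar Y(s+u)|+|\bar Z(s+u)|)\,\alpha(du).
\end{equation*}

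The strategy is to adapt the $L^{1}$-solution machinery of Briand, Delyon, Hu, Pardoux and Stoica \cite{Bral} to the delayed setting. Since $\delta\in(0,1)$ and $Z,Z'\in\bigcup_{\beta>\delta}\mathcal H^{\beta}$, one can pick $p\in(\delta,1)$ such that $\bar Z\in\mathcal H^{p}$. I would then apply the Meyer-Itô formula to the concave function $\Phi(x)=|x|^{p}$ (regularised via $\Phi_{\varepsilon}(x)=(|x|^{2}+\varepsilon)^{p/2}$ with $\varepsilon\downarrow 0$), exploit $\bar Y(T)=0$, take the supremum over $t\in[0,T]$ and expectations, and control the martingale term $p\int|\bar Y|^{p-1}\hat{\bar Y}\,\bar Z\,dW$ by the Burkholder-Davis-Gundy inequality (valid for $p<1$). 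The non-negative Itô correction $\frac{p(1-p)}{2}\int|\bar Y|^{p-2}|\bar Z|^{2}\mathbf 1_{\{\bar Y\neq 0\}}\,ds$ is turned into a bound on $\int|\bar Z|^{p}$ via the sub-Young inequality $|\bar Z|^{p}\le(p/2)|\bar Y|^{p-2}|\bar Z|^{2}\mathbf 1_{\{\bar Y\neq 0\}}+((2-p)/2)|\bar Y|^{p}$. The net effect is an estimate of the form
\begin{equation*}
\E\sup_{0\le t\le T}|\bar Y(t)|^{p}+c_{p}\,\E\left(\int_0^{T}|\bar Z(s)|^{2}\,ds\right)^{p/2}\le C_{p}\,\E\int_0^{T}|\bar Y(s)|^{p-1}|\Delta f(s)|\,ds.
\end{equation*}

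To close the estimate, I would convert the delayed Lipschitz bound into ordinary norms by Fubini. Using the extension convention $\bar Y(s)=\bar Y(0)$, $\bar Z(s)=0$ for $s<0$ one gets
\begin{equation*}
\int_{0}^{T}\!\!\int_{-T}^{0}|\bar Y(s+u)|\,\alpha(du)\,ds\le T\sup_{0\le r\le T}|\bar Y(r)|,\qquad \int_{0}^{T}\!\!\int_{-T}^{0}|\bar Z(s+u)|\,\alpha(du)\,ds\le\int_{0}^{T}|\bar Z(r)|\,dr,
\end{equation*}
and a further Young step on $|\bar Y|^{p-1}|\bar Y(s+u)|$ and $|\bar Y|^{p-1}|\bar Z(s+u)|$ (in the same spirit as the sub-Young inequality above) collapses the right-hand side to
\begin{equation*}
\E\sup_{0\le t\le T}|\bar Y(t)|^{p}+\E\left(\int_{0}^{T}|\bar Z(s)|^{2}\,ds\right)^{p/2}\le \Theta(K,T)\!\left[\E\sup_{0\le t\le T}|\bar Y(t)|^{p}+\E\left(\int_{0}^{T}|\bar Z(s)|^{2}\,ds\right)^{p/2}\right],
\end{equation*}
where $\Theta(K,T)\to 0$ as $K,T\to 0$. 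For $K$ and $T$ small enough one has $\Theta(K,T)<1$, so both quantities vanish and $\bar Y\equiv 0$, $\bar Z\equiv 0$. The hard part will be the rigorous execution of the Meyer-Itô step (the function $|x|^{p}$ is only Hölder continuous, so the local-time contribution and the singular integrand $|\bar Y|^{p-2}|\bar Z|^{2}\mathbf 1_{\{\bar Y\neq 0\}}$ must be controlled after passing to the limit in $\varepsilon$) together with the BDG estimate for $p<1$; the Fubini/Young bookkeeping on the delay is then essentially mechanical.
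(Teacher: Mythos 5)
There is a genuine gap, and it sits exactly at the step you describe as ``essentially mechanical'': the Fubini/Young bookkeeping cannot be closed for $p<1$. After the Meyer--It\^o step, the right-hand side contains, via \textbf{(H2)}(i), products of the form $|\bar Y(s)|^{p-1}|\bar Y(s+u)|$ and $|\bar Y(s)|^{p-1}|\bar Z(s+u)|$ with $p-1<0$. A Young decomposition $a^{\theta}b^{1-\theta}\le\theta a+(1-\theta)b$ that would dominate $|\bar Y(s)|^{p-1}|\bar Y(s+u)|$ by a combination of $|\bar Y(s)|^{p}$ and $|\bar Y(s+u)|^{p}$ would need $\theta=(p-1)/p<0$, so no such inequality exists; concretely, letting $\bar Y(s)\to0$ while $\bar Y(s+u)=1$ makes the product blow up while the candidate bound stays finite. (For $u=0$ the product collapses to $|\bar Y(s)|^{p}$ and everything works, which is why the non-delayed argument does not warn you.) Likewise $|\bar Y(s)|^{p-1}|\bar Z(s+u)|$ cannot be absorbed into the It\^o correction $\tfrac{p(1-p)}{2}|\bar Y(s)|^{p-2}|\bar Z(s)|^{2}$, because the $Z$-argument is shifted in time: a Fubini change of variables produces $|\bar Y(r-u)|^{p-2}|\bar Z(r)|^{2}$, which is not comparable to $|\bar Y(r)|^{p-2}|\bar Z(r)|^{2}$ since $p-2<0$. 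So the contraction estimate with $\Theta(K,T)<1$ is not obtainable along this route. Note also that your argument never uses the class $(D)$ hypothesis nor \textbf{(H2)}(ii), whereas uniqueness of $L^{1}$ solutions is known to fail for merely Lipschitz drivers already without delay, so some structural condition must enter.

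The paper's proof is organized differently and explains why $\delta$ appears in the statement. It applies the $p=1$ Tanaka-type inequality (Corollary 3.1 of \cite{Bral}) on $[t\wedge\tau_{n},\tau_{n}]$, takes conditional expectations, and uses the class $(D)$ property to let $n\to\infty$ and obtain $|\Delta Y(t)|\le\E\bigl(\int_{0}^{T}|f(s,Y_{s},Z_{s})-f(s,Y'_{s},Z'_{s})|\,ds\,\big|\,\mathcal{F}_{t}\bigr)$. It then invokes \textbf{(H2)}(ii) --- the $\delta$-sublinear growth, not the Lipschitz bound --- so that, since $Z,Z'\in\mathcal{H}^{\beta}$ for some $\beta>\delta$ and $g$ is integrable, the dominating random variable is $q$-integrable for some $q>1$; Doob's inequality then yields $\Delta Y\in\mathcal{S}^{q}$ with $q>1$, and one is back in the $p>1$ regime where Propositions \ref{lm1} and \ref{prop1} give $(\Delta Y,\Delta Z)=(0,0)$. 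The essential idea your proposal is missing is this integrability bootstrap through \textbf{(H2)}(ii); without it the $p<1$ estimates do not close.
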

\begin{proof}
Let $(Y,Z)$ and $(Y',Z')$ be two solutions of BSDE \eqref{A} with appropriate condition. For an integer $n\geq 1$, we consider the sequence of stopping time $(\tau_n)_{n\geq 1}$ defined by
\begin{eqnarray}
\tau_{n}=\inf\{t\in [0,T],\; \int^t_0(|Z(s)|^2+|Z'(s)|^2)ds\geq n\}\wedge T.
\end{eqnarray}
Denoting $\Delta Y(t)=Y(t)-Y'(t)$ and $\Delta Z(t)=Z(t)-Z'(t)$, Corollary 3.1 appear in \cite{Bral} yields the inequality
\begin{eqnarray*}
|\Delta Y(t\wedge \tau_n)|&\leq &|\Delta Y(\tau_n)|+\int^{\tau_n}_{t\wedge\tau_n}\langle \widehat{\Delta Y}(s),f(s,Y_s,Z_s)-f(s,Y'_s,Z'_s)\rangle ds\\
&&-\int^{\tau_n}_{t\wedge\tau_n} \langle \widehat{\Delta Y}(s),\Delta Z(s)dW(s)\rangle,
\end{eqnarray*}
where $\hat{x}=|x|^{-1}x{\bf 1}_{\{x\neq 0\}}$.

Thus it follows from Schwarz inequality that 
\begin{eqnarray}\label{Sch}
|\Delta Y(t\wedge \tau_n)|\leq |\Delta Y(\tau_n)|+\int^{\tau_n}_{t\wedge\tau_n}|f(s,Y_s,Z_s)-f(s,Y'_s,Z'_s)|ds-\int^{\tau_n}_{t\wedge\tau_n} \langle \widehat{\Delta Y}(s),\Delta Z(s)dW(s)\rangle.
\end{eqnarray} 
 Hence taking conditional expectation with respect $\mathcal{F}_t$, in both side of \eqref{Sch}, we have
 \begin{eqnarray*}
|\Delta Y(t\wedge \tau_n)|\leq \E\left(|\Delta Y(\tau_n)|+\int^{\tau_n}_{t\wedge\tau_n}|f(s,Y_s,Z_s)-f(s,Y'_s,Z'_s)|ds|\mathcal{F}_t\right).
\end{eqnarray*}
In view of of its definition, it is clear that $\Delta Y$, belongs to $(D)$ and hence it follows that $\P$-a.s, $\Delta Y(\tau_{n})=\Delta Y(\tau_n \wedge T)$ goes to $\Delta Y (T)=0$. Moreover, this convergence hols also in $L^1$. Therefore, it not difficult to prove that the continuous martingale $\E(\Delta Y_{\tau_n}|\mathcal{F}_t)$ leads to $0$ in ucp. According to a subsequence, we get $\P$-a.s that for all $t\in[0,T]$,
\begin{eqnarray}\label{E0}
|\Delta Y(t)|\leq \E\left(\int_0^T|f(s,Y_s,Z_s)-f(s,Y'_s,Z'_s)|ds|\mathcal{F}_t\right),
\end{eqnarray}
which provide in view of $({\bf H2})$-$(ii)$ that
\begin{eqnarray}\label{E1}
&&|\Delta Y(t)|\nonumber\\
&\leq & 2\gamma\E\left(\int_0^T\int^0_{-T}(g(s+u)+|Y(s+u)|+|Y'(s+u)|+|Z(s+u)|+|Z'(s+u)|)^{\delta}\alpha(du)ds|\mathcal{F}_t\right)\nonumber\\
&\leq & 2\gamma\E\left(\int^0_{-T}\int_u^{T+u}(g(s)+|Y(s)|+|Y'(s)|+|Z(s)|+|Z'(s)|)^{\delta}ds\alpha(du)|\mathcal{F}_t\right)\nonumber\\
&\leq & 4\gamma T(|Y(0)|+|Y'(0)|)^{\delta}+2\gamma\E\left(\int_0^{T}(g(s)+|Y(s)|+|Y'(s)|+|Z(s)|+|Z'(s)|)^{\delta}ds|\mathcal{F}_t\right),
\end{eqnarray}
where we use respectively Fubini's equality, change variable and the fact that, for $t<0,\, Y(t)=Y(0),\, Y'(t)=Y'(0)$ and  $Z(t)=Z'(t)=g(t)=0$.
 Next, knowing that there exists $\beta>\delta$ such that $Z$ and $Z'$ belong to $\mathcal{H}^{\beta}$ and since $Y$ and $Y'$ is of class $(D)$, it follows from \eqref{E1} and assumption $({\bf H1})$ that $\Delta Y$ belongs to $\mathcal{S}^{q}$, for some $q>1$. Moreover, using respectively Proposition \ref{lm1} and \ref{prop1}, we derive that $(\Delta Y,\Delta Z)$ is in $\mathcal{S}^q\times\mathcal{H}^q$ and $(\Delta Y,\Delta Z)=(0,0)$.
\end{proof}

Let now give the existence  part of our study. Before giving and prove a general existence result, we need to established the following result which is the case where the generator is
independent of the process $y_t$ and $z_t$, for all $t\in [0,T]$.
\begin{theorem}\label{T3}
Assume \textbf{(H1)}-\textbf{(H2)} and suppose that $f$ does not depend to $y_t$ and $z_t$ for all $t\in [0,T]$. Then BSDE  \eqref{A} has as a unique solution $(Y,Z)$ such that $Y$ belongs to $(D)$. Moreover, for each $\beta\in (0,1),\; (Y,Z) \in \mathcal{S}^{\beta}\times\mathcal{H}^{\beta}$.
\end{theorem}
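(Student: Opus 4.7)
The plan is to solve the equation explicitly, exploiting the fact that the generator does not depend on the unknown pair. Since $f(\cdot)$ is then just a prescribed progressively measurable process, hypothesis $(\textbf{H2})$-2 gives $\E\!\int_0^T|f(s)|\,ds<\infty$, so the random variable $\eta := \xi + \int_0^T f(s)\,ds$ belongs to $L^1(\Omega, \mathcal{F}_T, \P)$. Set $M(t) = \E[\eta \mid \mathcal{F}_t]$; this is a uniformly integrable c\`adl\`ag martingale, and by the $L^1$ martingale representation for the Brownian filtration there exists a predictable process $Z$ with $\int_0^T |Z(s)|^2\,ds < \infty$ $\P$-a.s.\ such that $M(t) = M(0) + \int_0^t Z(s)\,dW(s)$. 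Defining
$$Y(t) := M(t) - \int_0^t f(s)\,ds = \E\!\left[\xi + \int_t^T f(s)\,ds \,\Big|\, \mathcal{F}_t\right],$$
a direct computation shows $(Y,Z)$ satisfies \eqref{A} and $Y(T)=\xi$.

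To see $Y \in (D)$, note that for any stopping time $\tau \leq T$,
$$|Y(\tau)| \leq \E\!\left[|\xi| + \int_0^T |f(s)|\,ds \,\Big|\, \mathcal{F}_\tau\right],$$
and the family of conditional expectations of a fixed $L^1$ random variable is uniformly integrable. For $(Y,Z) \in \mathcal{S}^\beta \times \mathcal{H}^\beta$ with $\beta \in (0,1)$, I would invoke the weak-type $(1,1)$ maximal inequality to obtain $\E[\sup_t|M(t)|^\beta] \leq C_\beta (\E|\eta|)^\beta$, which combined with the concavity bound $\E[(\int_0^T|f(s)|\,ds)^\beta] \leq (\E\!\int_0^T|f(s)|\,ds)^\beta$ gives $Y \in \mathcal{S}^\beta$. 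Then the Burkholder--Davis--Gundy inequality at exponent $\beta$ applied to $M-M(0)$ yields
$$\E\!\left[\left(\int_0^T |Z(s)|^2\,ds\right)^{\beta/2}\right] \leq C'_\beta\, \E\!\left[\sup_{0\leq t\leq T}|M(t)|^\beta\right] < \infty,$$
so $Z \in \mathcal{H}^\beta$.

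For uniqueness, let $(Y,Z),(Y',Z')$ be two solutions with $Y,Y'\in(D)$. Because $f$ does not depend on $(y_s,z_s)$, the difference satisfies $\Delta Y(t) = \Delta Y(0) + \int_0^t \Delta Z(s)\,dW(s)$, a continuous local martingale; being of class $(D)$ it is uniformly integrable, and $\Delta Y(T)=0$ forces $\Delta Y\equiv 0$, whence $\int_0^\cdot\Delta Z\,dW\equiv 0$ and $\Delta Z\equiv 0$ by quadratic variation. The main delicate point is the use of Doob's maximal inequality and of the Burkholder--Davis--Gundy estimate in the range $\beta \in (0,1)$, together with the $L^1$ martingale representation under the Brownian filtration; these are the standard $L^1$-BSDE tools of Briand et al.\ \cite{Bral} and apply here unchanged because the absence of $(y,z)$-dependence reduces the problem to a pure representation exercise, so no fixed-point argument is required.
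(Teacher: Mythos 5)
Your proof is correct, and it takes a genuinely different route from the paper's. The paper proceeds by approximation: it truncates the data, setting $\xi_n=q_n(\xi)$ and $f_n=q_n(f)$ with $q_n(x)=\tfrac{n}{|x|\vee n}x$, solves the resulting BSDEs in $\mathcal{S}^2\times\mathcal{H}^2$ via Pardoux--Peng, shows the approximating solutions form a Cauchy sequence in $(D)$, in $\mathcal{S}^\beta$ (via Lemma 6.1 of \cite{Bral}) and in $\mathcal{H}^\beta$ (via the a priori estimate of Proposition \ref{lm1}), and then passes to the limit in ucp. You instead observe that, since the generator does not depend on $(y,z)$, the equation is solved in closed form: $Y(t)=\E[\xi+\int_t^T f(s)\,ds\,|\,\mathcal{F}_t]$ with $Z$ supplied by the martingale representation theorem applied to the $L^1$ martingale $M(t)=\E[\xi+\int_0^T f(s)\,ds\,|\,\mathcal{F}_t]$ (valid for the Brownian filtration at the level of local martingales, and yielding a continuous $M$, hence a continuous $Y$ as required by the paper's definition of a solution). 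The integrability claims then follow from exactly the same two inequalities the paper uses, namely the $\beta$-moment consequence of Doob's weak $(1,1)$ maximal inequality --- which is precisely Lemma 6.1 of \cite{Bral} --- and the Burkholder--Davis--Gundy inequality at exponent $\beta<1$; so the analytic content is shared, but you dispense with the truncation and limiting procedure entirely. Your uniqueness argument is also cleaner and slightly stronger: the difference of two solutions is a continuous local martingale of class $(D)$, hence a uniformly integrable martingale vanishing at $T$, hence identically zero, with $\Delta Z\equiv 0$ by quadratic variation; this requires no stopping-time localization and no integrability assumption on $Z$ beyond the definition of a solution. The one thing the paper's longer route buys is that it rehearses, in this simple setting, the approximation-and-a-priori-estimate machinery that is reused in the Picard iteration for the general Theorem \ref{T2}; your direct construction does not generalize to a $(y,z)$-dependent generator, but for the statement at hand it is complete.
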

\begin{proof}
Let $(q_n)_{n\geq 1}$ be sequence of function defined by
\begin{eqnarray*}
q_{n}(x)=\frac{n}{|x|\vee n}x, \;\;\;\;\; \forall\;\; x\in \R. 
\end{eqnarray*}
For each $n\geq 1$, we set 
\begin{eqnarray*}
\xi_n= q_{n}(\xi),\;\;\;\; f_n(t)=q_n(f(t)). 
\end{eqnarray*}
For each $n\geq 1$, $|q_{n}(x)|\leq n$ and hence $\xi_n$ and $f_n$ are respectively bounded and square integrable random. Therefore, it follows from Lemma 2.1 in \cite{PardPeng} that the BSDE
associated to parameter $(\xi_n,f_n$ has a unique solution $(Y^n,Z^n)$ in the space $\mathcal{S}^2\times \mathcal{H}^2$. Setting $\Delta \xi^{n,i}=\xi_{n+i}-\xi_n,\;\Delta Y^{n,i}=Y^{n+i}-Y^n$ and $\Delta Z^{n,i}=Z^{n+i}-Z^{n}$ and applying the same computation as in the uniqueness part (see \eqref{E0}), we have
\begin{eqnarray*}
|\Delta Y^{n,i}(t)|\leq \E\left(|\Delta \xi^{n,i}|+\int_0^T|f_{n+1}(s)-f_{n}(s)|ds|\mathcal{F}_t\right).
\end{eqnarray*}
Thus in view of $q_n$ definition's, we obtain
\begin{eqnarray}\label{E2}
|\Delta Y^{n,i}(t)|\leq \E\left(|\xi|{\bf 1}_{\{|\xi>n\}}+\int_0^T|f(s)|{\bf 1}_{\{|\xi>n\}}ds|\mathcal{F}_t\right),
\end{eqnarray}
from which we deduce without difficulty
\begin{eqnarray}\label{E4}
\|\Delta Y^{n,i}\|_1\leq \E\left(|\xi|{\bf 1}_{\{|\xi>n\}}+\int_0^T|f(s)|{\bf 1}_{\{|\xi>n\}}ds\right).
\end{eqnarray}
Moreover, it follows from Lemma 6.1 in \cite{Bral} that, for any $\beta\in(0,1)$,
\begin{eqnarray}\label{E3}
\E\left(\sup_{t\in [0,T]}|\Delta Y^{n,i}(t)|^{\beta}\right)\leq \frac{1}{1-\beta}\E\left[\left(|\xi|{\bf 1}_{\{|\xi>n\}}+\int_0^T|f(s)|{\bf 1}_{\{|\xi>n\}}ds\right)^{\beta}\right].
\end{eqnarray}
The inequalities \eqref{E3} and \eqref{E4} imply that $Y^{n}$ is a Cauchy sequence for the norm $\|.\|_1$ and for a distance in $\mathcal{S}^{\beta}$, for $\beta\in (0,1)$, define previously in Section 2. Hence $Y$ the progressive measurable continuous process, limit of this sequence, belongs to $(D)$ and $\mathcal{S}^{\beta}$, for $\beta\in (0,1)$.

On the other hand, let recall that $(\Delta Y^{n,i},\Delta Z^{n,i})$ solve this BSDE
\begin{eqnarray*}
\Delta Y^{n,i}(t)=\Delta\xi^{n,i}+\int_t^TF^{n,i}(s)ds-\int^T_t\Delta Z^{n,i}(s)dW(s),
\end{eqnarray*}
where $F^{n,i}(t)=f^{n+i}-f^{n}(t)$.

Therefore, using the Proposition 3.1, we get that for $\beta\in (0,1)$, 
\begin{eqnarray*}
\E\left[\left(\int^T_0|\Delta Z^{n,i}(s)|^2ds\right)^{\beta/2}\right]\leq C_{\beta}\E\left[\left(\sup_{t\in [0,T]}|\Delta Y^{n,i}(t)|^{\beta}+\int_0^T|f(s)|{\bf 1}_{\{|\xi>n\}}ds\right)^{\beta}\right],
\end{eqnarray*}
which provides that $(Z^n)_{n\geq 1}$ is a Cauchy sequence in $\mathcal{H}^{\beta}$, for $\beta\in (0,1)$ for a metric defined in Section 2 and then converge in this space to a progressively measurable process $Z$. Since $\displaystyle \int^t_0 Z^{n}(s)dW(s)$ converge to $\displaystyle \int_0^t Z(s)dW(s)$ in ucp, we provide that by taking the limit ucp in BSDE associated to the data $(\xi^n,f_n)$, the process $(Y,Z)$ solve BSDE with data $(\xi,f)$.  
     
\end{proof}

\begin{theorem}\label{T2}
Assume \textbf{(H1)}-\textbf{(H2)}. If $K$ and $T$ small enough, then BSDE  \eqref{A} has as a unique solution $(Y,Z)$ such that $Y$ belongs to $(D)$. Moreover, for each $\beta\in (0,1),\; (Y,Z) \in \mathcal{S}^{\beta}\times\mathcal{H}^{\beta}$.
\end{theorem}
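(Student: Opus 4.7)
Since uniqueness has already been furnished by Theorem \ref{Th1}, only existence remains. My plan is a Picard iteration in which Theorem \ref{T3} plays the role of the one-step solver.

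Fix $\beta\in(0,1)$ and form the complete metric space $\mathcal{E}=(D)\times \mathcal{H}^{\beta}(\R^{k\times d})$ endowed with its natural product metric. For any $(U,V)\in\mathcal{E}$, the progressively measurable driver $\tilde f(s):=f(s,U_s,V_s)$ is independent of the unknowns, and assumption $({\bf H2})(ii)$ together with the integrability of $g$ and $f(\cdot,{\bf 0},{\bf 0})$ yields $\E\int_0^T|\tilde f(s)|\,ds<\infty$. Theorem \ref{T3} then provides a unique pair $(Y,Z)\in \mathcal{E}$ solving \eqref{A} with driver $\tilde f$ and terminal $\xi$, and I set $\Phi(U,V)=(Y,Z)$.

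The core of the argument is to show that $\Phi$ is a strict contraction on $\mathcal{E}$ when $K$ and $T$ are small enough. For two inputs with images $(Y^i,Z^i)$, $i=1,2$, the difference $(\Delta Y,\Delta Z)$ solves a linear BSDE with zero terminal and driver $F(s)=f(s,U^1_s,V^1_s)-f(s,U^2_s,V^2_s)$. Reproducing the conditional-expectation argument that led to \eqref{E0}, combining it with the delayed Lipschitz condition $({\bf H2})(i)$, Fubini, and the change of variable $r=s+u$ (using that $\Delta U(r)=\Delta V(r)=0$ for $r<0$), I obtain
\begin{equation*}
|\Delta Y(t)|\leq K\,\E\!\left[\int_0^T \bigl(|\Delta U(s)|+|\Delta V(s)|\bigr)\,ds\,\Big|\,\mathcal{F}_t\right],
\end{equation*}
which, after taking supremum over stopping times and applying Cauchy--Schwarz to the $\Delta V$-term, bounds $\|\Delta Y\|_1$ by $KT\,\|\Delta U\|_1$ plus a $T$-factor times a quantity controlled by $\|\Delta V\|_{\mathcal{H}^\beta}$. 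For the $Z$-component, applying Proposition \ref{lm1} (in its $\beta<1$ extension, justified exactly as in the latter half of the proof of Theorem \ref{T3}) to the linear BSDE for $(\Delta Y,\Delta Z)$ controls $\|\Delta Z\|_{\mathcal{H}^\beta}$ by the $\mathcal{S}^\beta$-norm of $\Delta Y$ plus an $L^1$-quantity involving $F$, both of which are again bounded by the inputs. Collecting the two bounds, the contraction constant has the form $K\,\psi(T)$ with $\psi(T)\downarrow 0$ as $T\downarrow 0$, hence is strictly less than $1$ for $K$ and $T$ small.

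Banach's fixed point theorem then yields a unique $(Y,Z)\in\mathcal{E}$ with $\Phi(Y,Z)=(Y,Z)$, which by construction solves \eqref{A}; the extra regularity $(Y,Z)\in\mathcal{S}^\beta\times\mathcal{H}^\beta$ for every $\beta\in(0,1)$ is then inherited from Theorem \ref{T3} applied at the fixed point. The main technical obstacle is the $Z$-component: because the data are only integrable, the $\mathcal{H}^p$-Burkholder framework with $p\geq 1$ is not directly available, so one must carefully justify an a priori estimate of Proposition \ref{lm1}-type for exponents $\beta<1$ and then combine the $(D)$- and $\mathcal{H}^\beta$-bounds so that the joint estimate really produces a strict contraction on $\mathcal{E}$ once $K$ and $T$ are sufficiently small.
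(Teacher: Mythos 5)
There is a genuine gap at the heart of your contraction argument, and it is precisely the point the paper's proof is designed to circumvent. After the conditional-expectation step you are left with the term $\E\left[\int_0^T|\Delta V(s)|\,ds\right]$, which by Cauchy--Schwarz is controlled by $T^{1/2}\,\E\left[\left(\int_0^T|\Delta V(s)|^2ds\right)^{1/2}\right]$, i.e.\ by the $\mathcal{H}^{1}$-norm of $\Delta V$ --- \emph{not} by $\|\Delta V\|_{\mathcal{H}^{\beta}}$ for $\beta<1$. For $\beta<1$ the quantity $\E[X^{\beta}]$ gives no control whatsoever on $\E[X]$, so the $\mathcal{H}^{\beta}$ metric cannot close the loop on the $Y$-side of the estimate; and on the $Z$-side, the a priori estimate you invoke ``in its $\beta<1$ extension'' is not available for a driver that genuinely depends on $z$: Proposition \ref{lm1} is stated (and provable) only for $p>1$, its constant $\lambda_p$ degenerating as $p\downarrow 1$, while the $\beta<1$ estimate used inside the proof of Theorem \ref{T3} relies on the driver being independent of $(y,z)$. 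You flag this as ``the main technical obstacle'' but do not resolve it, so the contraction constant $K\,\psi(T)$ is never actually established on $(D)\times\mathcal{H}^{\beta}$.

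The paper runs the same Picard scheme but differs in exactly this respect, and it is where assumption $({\bf H2})$-$(ii)$ --- which your argument never uses --- enters. The difference of two successive iterates is bounded by the conditional expectation of the random variable $A_n$, built from $\delta$-powers ($\delta<1$) of quantities lying only in $\mathcal{H}^{\beta}$ and $(D)$; choosing $q>1$ with $\delta q<1$ makes $A_n$ $q$-integrable, so $\bar Y^n=Y^{n+1}-Y^n$ is bootstrapped into $\mathcal{S}^{q}$ with $q>1$. Only then do Propositions \ref{lm1} and \ref{prop1} apply legitimately, yielding the contraction estimate in $\mathcal{S}^{q}\times\mathcal{H}^{q}$ for $K$ and $T$ small; membership in $(D)$ and in $\mathcal{S}^{\beta}\times\mathcal{H}^{\beta}$ for every $\beta\in(0,1)$ is then inherited from the stronger convergence. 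Without this bootstrap --- that is, with only the Lipschitz condition $({\bf H2})$-$(i)$ --- a fixed point argument in $(D)\times\mathcal{H}^{\beta}$ is not expected to close for merely integrable data, which is the well-known difficulty of the $L^1$ theory already present in the non-delayed case.
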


\begin{proof}
Our method is based on the well-know Picard iteration procedure. For that, let us set $Y^0,Z^0)=(0,0)$ and define recursively, according to Theorem \ref{T3}, for each $n\geq 1$,
\begin{eqnarray}\label{E6}
Y^{n+1}(t)=\xi+\int^T_tf(s,Y^n_s,Z^n_s)ds-\int^T_tZ^{n+1}(s)dW(s),\;\; 0\leq t\leq T,
\end{eqnarray}
where for each $n\geq 1,\; Y^n$ belongs to $(D)$ and $(Y^n,Z^n)$ is in $\mathcal{S}^{\beta}\times\mathcal{H}^{\beta}$, for $\beta\in (0,1)$.

For each $n\geq 1$, it follows from the same computation as in the proof of Theorem \ref{Th1}, that for all $t\in[0,T]$,
\begin{eqnarray}\label{E5}
&&|Y^{n+i}(t)-Y^{n}(t)|
\nonumber\\ &\leq& 4\gamma T(|Y^n(0)|+|Y^{n-1}(0)|)^{\delta}\nonumber\\
&&+2\gamma\E\left(\int_0^{T}(g(s)+|Y^n(s)|+|Y^{n-1}(s)|+|Z^n(s)|+|Z^{n-1}(s)|)^{\delta}ds|\mathcal{F}_t\right).
\end{eqnarray}
Since the processes $Z^{n}$ and $Z^{n-1}$ belong to $\mathcal{H}^{\beta},\, Y^n$ and $Y^{n-1}$ belong to $(D)$, and $(g(t))_{0\leq t\leq T}$ is integrable, we deduce that the random variable
\begin{eqnarray*}
A_n=(|Y^n(0)|+|Y^{n-1}(0)|)^{\delta}+\int_0^{T}(g(s)+|Y^n(s)|+|Y^{n-1}(s)|+|Z^n(s)|+|Z^{n-1}(s)|)^{\delta}ds
\end{eqnarray*}
is $q$-integrable as soon as $\delta q<1$. Next, fix $q$ such that $\delta q<1$, we provide, in view of \eqref{E5}, that $\bar{Y}^{n}=Y^{n+1}-Y^n$ belongs to $\mathcal{S}^{\beta}$. Moreover, setting $\bar{Z}^{n}=Z^{n+1}-Z^{n}$ and $f_n(r,y_r,z_r)=f(r,y_r+Y^{n-1}_r,z_r+Z^{n-1}_r)-f(r,Y^{n-1}_r,Z^{n-1}_r)$, the process $(\bar{Y}^n,\bar{Z}^n)$ satisfy the following BSDE 
\begin{eqnarray*}
\bar{Y}^n(t)=\int^T_tf_n(r,\bar{Y}^n_r,\bar{Z}^n_r)dr-\int^T_t\bar{Z}^n(r)dW(r).
\end{eqnarray*}
Using again corollary 3.1 of \cite{Bral}, we get
\begin{eqnarray*}
|\bar{Y}^n(t)(t)|&\leq &\int^{T}_{t}\langle \widehat{\bar{Y}^n}(s),f_n(s,\bar{Y}^n_s,\bar{Z}^n_s)\rangle ds-\int^{T}_{t} \langle \widehat{\bar{Y}^n}(s),\bar{Z}^n(s)dW(s)\rangle.
\end{eqnarray*}
But $({\bf H2})$-$(ii)$ implies
\begin{eqnarray*}
&&\langle \widehat{\bar{Y}^n}(s),f_n(s,\bar{Y}^n_s,\bar{Z}^n_s)\rangle\\
&\leq & |f(s,Y^n_s,Z_s^n)-f(s,Y^{n-1}_s,Z^{n-1}_s)|\\
&\leq & 2\gamma \int_{-T}^{ø}(g(s+u)+|Y^n(s+u)|+|Y^{n-1}(s+u)|+|Z^n(s+u)|+|Z^{n-1}(s+u)|)^{\delta}\alpha(du).
\end{eqnarray*}
Thus, Proposition 3.1 yields that $\bar{Z}^n$ belongs to $\mathcal{H}^{q}$ since $A_n$ is $q$-integrable.

On the other with Proposition 3.2, we derive
\begin{eqnarray*}
\E\left[\sup_{0\leq t\leq T}|\bar{Y}^n(t)|^q+\left(\int^T_0|\bar{Z}^n(s)|^2ds\right)^{q/2}\right]\leq C_q\E\left[\left(\int^T_0|f(s,Y^n_s,Z_s^n)-f(s,Y^{n-1}_s,Z^{n-1}_s)|ds\right)^q\right],
\end{eqnarray*}
where $C_q$ i a constant depending only on $q$.

For $n\geq 2$, knowing that $f$ is $K$ -delayed Lipschitz, we obtain
\begin{eqnarray*}
\E\left[\sup_{0\leq t\leq T}|\bar{Y}^n(t)|^q+\left(\int^T_0|\bar{Z}^n(s)|^2ds\right)^{q/2}\right]\leq C\E\left[\sup_{0\leq t\leq T}|\bar{Y}^{n-1}(t)|^p+\left(\int^T_0|\bar{Z}^{n-1}(s)|^{2}ds\right)^{q/2}\right],
\end{eqnarray*}
where $C=C_qK^{p/2}\max(1,T^{p/2})$. Thus, recursively we get 
\begin{eqnarray*}
\E\left[\sup_{0\leq t\leq T}|\bar{Y}^n(t)|^q+\left(\int^T_0|\bar{Z}^n(s)|^2ds\right)^{q/2}\right]\leq C^{n-1}\E\left[\sup_{0\leq t\leq T}|\bar{Y}^{1}(t)|^p+\left(\int^T_0|\bar{Z}^{1}(s)|^{2}ds\right)^{q/2}\right],
\end{eqnarray*}
 Since $(Y^1,Z^1)$ belongs to $\mathcal{S}^{q}\times\mathcal{H}^q$ and $C_qK^p\max(T^{p/2},1)<1$, it follows that $(Y^n,Z^n)$ is a Cauchy sequence on $\mathcal{S}^{q}\times\mathcal{H}^q$ and then converge in $\mathcal{S}^{q}\times\mathcal{H}^q$ to the process $(Y,Z)$. Better this convergence takes place in $\mathcal{S}^{\beta}\times\mathcal{H}^{\beta}$ for $\beta\in (0,1)$. Moreover, $Y^n$ converge to $Y$ for the norm $\|.\|_1$ since the convergence of $\mathcal{S}^q$ with $q\geq 1$ is stronger than the $\|.\|_1$-norm.
 
 To end the proof, it remains to pass to the limit in \eqref{E6} to prove that $(Y,Z)$ solve BSDE \eqref{A}.
\end{proof}

\label{lastpage-01}
\end{document}